\newtheorem{thm}[subsection]{Theorem}
\newtheorem{lem}[subsection]{Lemma}
\newtheorem{prop}[subsection]{Proposition}
\newtheorem{cor}[subsection]{Corollary}
{
\theoremstyle{definition}

}
\newenvironment{defn}
{\pushQED{\qed}\defnx}
{\popQED\enddefnx}
\newcommand{\del}{\partial}
\newcommand{\ZZ}{\mathbb Z}
\newcommand{\wt}{\mathrm{wt}}
\newcommand{\conv}{\mathrm{conv}}
\newcommand{\supp}{\mathrm{supp}}
\definecolor{darkred}{rgb}{0.7,0,0}
\DeclareMathOperator{\img}{img}
\begin{document}
\title{Asymptotically maximal Schubitopes}
\author{Jack Chen-An Chou}
\address{Jack Chen-An Chou, School of Mathematics, University of Minnesota, Minneapolis, MN 55455.}
\email{chou0188@umn.edu}

\author{Linus Setiabrata}
\address{Linus Setiabrata, Department of Mathematics, Massachusetts Institute of Technology, Cambridge, MA 02139.}
\email{setia@mit.edu}

\maketitle
\begin{abstract}
We find a layered permutation $w\in S_n$ whose Schubert polynomial $\mathfrak S_w(x_1, \dots, x_n)$ has support of size asymptotically at least $n!/4^n$. This gives precise asymptotics for the growth rate of $\beta(n)\colonequals \max_{w\in S_n}|\supp(\mathfrak S_w)|$. We find a different layered permutation $w\in S_n$ whose Grothendieck polynomial has support of size asymptotically at least $n!/e^{\sqrt{2n} \cdot \ln(n)}$ and obtain more precise asymptotics for the growth rate of $\beta^{\mathfrak G}(n)\colonequals \max_{w\in S_n}|\supp(\mathfrak G_w)|$.
\end{abstract}
\section{Introduction}
Schubert polynomials $\mathfrak S_w(x_1, \dots, x_n)$, indexed by permutations $w\in S_n$, are lifts of Schubert cycles in the cohomology of the flag variety \cite{ls82}. The specialization $\mathfrak S_w(1,\dots,1)$ is equal to the number of reduced pipe dreams of $w$, and has a geometric interpretation as the degree of the matrix Schubert variety of $w$. Writing $u(n)\colonequals \max_{w\in S_n}\mathfrak S_w(1, \dots,1)$, Stanley \cite{stanley17} observed that
\[
\frac14 \leq \liminf_{n\to\infty} \frac{\log_2(u(n))}{n^2} \leq \limsup_{n\to\infty}\frac{\log_2(u(n))}{n^2} \leq \frac12
\]
and asked whether $\lim_{n\to\infty}\frac{\log_2(u(n))}{n^2}$ exists, and, if so, what the value of this limit is. His question remains open, but see \cite{ms16,mpp19,gao21,gl24,mppy25,zhang25}, and in particular \cite{app26}, for recent progress on this problem and its variants.

We study the growth rate of the maximal sizes of \emph{supports} of Schubert polynomials. Write $\supp(\mathfrak S_w)\colonequals \{\alpha \in \ZZ_{\geq 0}^n\colon \mathbf x^\alpha\textup{ appears in } \mathfrak S_w\}$.
\begin{thm}
\label{thm:main}
Let $\beta(n)\colonequals \max_{w\in S_n}|\supp(\mathfrak S_w)|$. Then
\[
\lim_{n\to\infty}\frac{\ln(\beta(n))}{n\ln(n)} = 1.
\]
More precisely,
\[
-\ln(4)-1 \leq \liminf_{n\to\infty}\frac{\ln(\beta(n)) - n\ln(n)}n \leq \limsup_{n\to\infty}\frac{\ln(\beta(n)) - n\ln(n)}n \leq -1.
\]
\end{thm}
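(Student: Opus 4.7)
The argument splits into upper and lower bounds.

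\emph{Upper bound.} For $w\in S_n$ and $\alpha\in\supp(\mathfrak{S}_w)$, the Billey--Jockusch--Stanley pipe dream formula bounds $\alpha_i$ by the number of crosses in row $i$ of a reduced pipe dream of $w$, which is at most $n-i$ because crosses occupy cells $(i,j)$ with $i+j\le n$. Therefore $|\supp(\mathfrak{S}_w)|\le\prod_{i=1}^{n-1}(n-i+1)=n!$, and Stirling's $\ln n!=n\ln n-n+O(\ln n)$ gives $\limsup_{n\to\infty}\frac{\ln\beta(n)-n\ln n}{n}\le-1$.

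\emph{Lower bound.} The plan is to exhibit a layered permutation $w_{\mathbf{a}_n}\in S_n$ satisfying $|\supp(\mathfrak{S}_{w_{\mathbf{a}_n}})|\ge n!/4^n\cdot e^{o(n)}$. The central combinatorial tool is a product formula for Schubert polynomials of layered permutations: for $w_{\mathbf{a}}$ with composition $\mathbf{a}=(a_1,\dots,a_k)$ and layer starting positions $b_i=1+\sum_{j<i}a_j$, one has
\[
\mathfrak{S}_{w_{\mathbf{a}}}=\prod_{i=1}^k \mathfrak{S}_{\widetilde{w}_i},
\]
where $\widetilde{w}_i$ is the single-layer permutation acting as the longest element $w_0^{(a_i)}$ on positions $\{b_i,\dots,b_i+a_i-1\}$ and as the identity elsewhere. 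This factorization can be proved by a pipe-dream bijection: the Rothe diagram of $w_{\mathbf{a}}$ is a disjoint union of shifted staircases (one per layer), and a reduced pipe dream of $w_{\mathbf{a}}$ decomposes uniquely into reduced pipe dreams of each $\widetilde{w}_i$. Each $\widetilde{w}_i$ is vexillary of staircase shape, so $\mathfrak{S}_{\widetilde{w}_i}$ is a flagged Schur polynomial whose support is well understood, and $\supp(\mathfrak{S}_{w_{\mathbf{a}}})$ equals the Minkowski sum of the factor supports.

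The principal obstacle is choosing $\mathbf{a}_n$ so that the Minkowski sum is as large as possible. Naive uniform compositions (e.g.\ all layers of size $2$) suffer severe Minkowski overlap: the support is in bijection with multisets $\{j_1\le\cdots\le j_m\}$ satisfying $j_k\le 2k-1$, giving the ternary-tree number $\binom{3m}{m}/(2m+1)\sim(27/4)^{n/2}/n^{3/2}$ distinct monomials, which is far short of $n!/4^n$. I would expect the correct construction to use varying layer sizes --- perhaps $\Theta(\sqrt n)$ layers whose sizes grow so that later layers contribute fresh top coordinates (since layer $i$ contributes only to $\alpha_j$ with $j\le b_i+a_i-2$) --- and then lower-bound the resulting count by identifying a family of monomials that arise essentially uniquely in the Minkowski sum. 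Applying Stirling to this count then yields $\liminf\frac{\ln\beta(n)-n\ln n}{n}\ge-\ln 4-1$.
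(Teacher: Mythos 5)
Your upper bound is correct and gives the same $n!$ bound as the paper, via pipe dreams instead of the paper's appeal to the staircase-divisibility of Grothendieck monomials (Lemma~\ref{lem:upper-bound}). Your identification of the Minkowski-sum structure for layered permutations is also essentially right: since $D(w(b_1,\dots,b_m))$ decomposes column-wise as a disjoint union of shifted staircases, Theorem~\ref{thm:schubert-support-via-diagrams} immediately gives
\[
\supp(\mathfrak S_{w(b_1,\dots,b_m)}) = \sum_{i=1}^m \bigl\{\wt(C_i) \colon C_i \leq D(\tilde w_i)\bigr\}
\]
as a Minkowski sum, without needing the product formula $\mathfrak S_w = \prod_i \mathfrak S_{\tilde w_i}$ at all. (That product formula is itself a stronger claim than you need, and you should not assume it without proof.)

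The genuine gap is in the lower bound, and it is substantial. You correctly observe that uniform block sizes are far too small and that one must control Minkowski overlap, but you stop at ``I would expect the correct construction to use\dots perhaps $\Theta(\sqrt n)$ layers\dots and then lower-bound the resulting count by identifying a family of monomials that arise essentially uniquely.'' This is exactly the hard part, and you have not done it. Your guess of $\Theta(\sqrt n)$ growing layers is not what works for the Schubert case: the paper uses $\Theta(\log n)$ blocks of geometrically decreasing sizes $\lfloor n/2\rfloor, \lfloor n/4\rfloor, \dots$ (possibly padded by $1$s at the front), and the reason is Theorem~\ref{thm:tech-main}: if the last block has size $b_m \leq n/2$, one can ``peel it off'' and gain a factor of $b_m!$. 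The mechanism is Lemma~\ref{lem:last-block} — because the columns of the last block's Rothe diagram sit in rows $\{n-b_m+1,\dots,n\}$ and have height at most $b_m - 1 < n - b_m$, there is enough slack to realize \emph{any} intersection pattern with $D(w)$, so the last block can independently produce all $b_m!$ of the restricted weight vectors in coordinates $n-b_m+1,\dots,n$. Since the other blocks contribute nothing to those coordinates, the $b_m!$ translates of $\supp(\mathfrak S_{w'})$ are disjoint. Iterating requires the geometric decrease $b_k \leq \tfrac12(n - \sum_{i>k} b_i)$, which your $\Theta(\sqrt n)$ growing layers would not satisfy. Without a concrete choice of block sizes, a proof of disjointness of the translates, and the asymptotic estimate $\prod_k \lfloor n/2^k\rfloor! \geq n! / 4^n \cdot e^{-O(\log^2 n)}$ (Lemma~\ref{lem:floor-fact-est} and Corollary~\ref{cor:upshot}), the $-\ln 4 - 1$ lower bound is not established.
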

Theorem~\ref{thm:main} answers a problem \cite[Prob 5.5]{gl24} posed by Guo--Lin. We do not know if the limit $\lim_{n\to\infty}\frac{\ln(\beta(n)) - n\ln(n)}n$ exists, nor do we have a conjecture for its value.

The key ingredient in our proof of Theorem~\ref{thm:main} is the observation that a certain \emph{layered} permutation has support of size asymptotically at least $n!/4^n$. Layered permutations are conjectured \cite{app26} to asymptotically maximize $\mathfrak S_w(1, \dots, 1)$. The same paper \cite{app26} disproved an earlier conjecture \cite{ms16} that layered permutations genuinely maximize $\mathfrak S_w(1, \dots, 1)$, exhibiting a counterexample in the symmetric group $S_{17}$. Guo--Lin conjectured \cite[Prob 5.3]{gl24} that there exist permutations simultaneously maximizing $\mathfrak S_w(1, \dots,1)$ and $|\supp(\mathfrak S_w)|$.

Grothendieck polynomials $\mathfrak G_w$ are inhomogeneous deformations of Schubert polynomials; they are generating functions for (possibly nonreduced) pipe dreams of $w$. We are able to produce a layered permutation with support of size asymptotically at least $n!/e^{\sqrt{2n} \cdot \ln(n)}$, i.e., of size $n!$ up to a subexponential factor. In particular, the maximal sizes of supports of Grothendieck polynomials satisfy the following more precise asymptotics.
\begin{thm}
\label{thm:groth}
Let $\beta^{\mathfrak G}(n)\colonequals\max_{w\in S_n}|\supp(\mathfrak G_w)|$. Then
\[
\lim_{n\to\infty}\frac{\ln(\beta^{\mathfrak G}(n)) - n\ln(n)}n = -1.
\]
\end{thm}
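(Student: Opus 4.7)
The plan is to sandwich $\ln(\beta^{\mathfrak G}(n)) - n\ln n$ between two quantities both of the form $-n + o(n)$; the theorem then follows by dividing by $n$ and letting $n \to \infty$.

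\textbf{Upper bound.} For any $w \in S_n$, every monomial $\mathbf x^\alpha$ appearing in $\mathfrak G_w$ arises from a (possibly nonreduced) pipe dream drawn in the staircase $\{(i,j) : i, j \geq 1,\; i + j \leq n\}$. Row $i$ of this staircase contains only $n-i$ boxes, so every exponent $\alpha \in \supp(\mathfrak G_w)$ satisfies $\alpha_i \leq n-i$. Therefore
\[
|\supp(\mathfrak G_w)| \;\leq\; \prod_{i=1}^{n-1}(n-i+1) \;=\; n!.
\]
Stirling's approximation $\ln(n!) = n\ln n - n + O(\ln n)$ then yields $\limsup_{n\to\infty} (\ln\beta^{\mathfrak G}(n) - n\ln n)/n \leq -1$.

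\textbf{Lower bound.} As flagged in the abstract, this reduces to exhibiting an explicit layered permutation $w \in S_n$ whose Grothendieck polynomial satisfies $|\supp(\mathfrak G_w)| \geq n!/e^{\sqrt{2n}\ln n}$. Pipe dreams of a layered permutation decompose along its blocks, and the Grothendieck polynomial of each decreasing block can be analyzed directly. The strategy is to choose the layer profile $(\lambda_1, \dots, \lambda_k)$ so that the distinct exponent vectors realized by pipe dreams of $w$ exhaust all but a subexponential fraction of the staircase box $\prod_i \{0, \dots, n-i\}$. Granting this,
\[
\ln\!\left(n!/e^{\sqrt{2n}\ln n}\right) - n\ln n \;=\; -n - \sqrt{2n}\ln n + O(\ln n),
\]
which, after dividing by $n$, tends to $-1$ since $\ln n/\sqrt n \to 0$.

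\textbf{Main obstacle.} The difficult step is the support lower bound itself: showing that the pipe dream exponents of the chosen layered $w$ cover the staircase box up to a factor of only $e^{\sqrt{2n}\ln n}$. The two most promising approaches are (i) Lascoux's $K$-theoretic transition/tree recursion, used to recursively track the number of distinct leading exponents appearing under the Grothendieck divided difference; or (ii) a direct pipe-dream construction exhibiting an explicit injection from a dense subset of $\prod_i \{0, \dots, n-i\}$ into $\supp(\mathfrak G_w)$ via local modifications within each block. I also expect a careful optimization over layer sizes $\lambda_j$ (balancing within-block richness against cross-block redundancy) to be necessary to land on the exact subexponential factor $e^{\sqrt{2n}\ln n}$. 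Once the estimate is secured, combining it with the upper bound yields $\lim_{n\to\infty}(\ln\beta^{\mathfrak G}(n) - n\ln n)/n = -1$.
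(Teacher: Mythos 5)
Your upper bound is correct and essentially matches the paper's Lemma~\ref{lem:upper-bound}: every exponent is dominated by the staircase $(n-1, n-2, \dots, 1, 0)$, so $|\supp(\mathfrak G_w)| \leq n!$. That part is fine.

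The lower bound, however, is not proven — you explicitly flag it as the ``main obstacle'' and only sketch two candidate strategies (transition recursion, or a direct pipe-dream injection) without executing either. This is a genuine gap: the entire content of the theorem lies in producing a permutation whose support size is $n!$ up to a subexponential factor, and neither of your proposed routes is carried out. In fact, neither is the route the paper takes. The paper's lower bound (Proposition~\ref{prop:groth-precise}) does not argue directly from pipe dreams or from the divided-difference recursion. Instead it invokes a structural result about \emph{fireworks} permutations (Proposition~\ref{prop:fireworks-grothendieck}, from \cite{cs25}): for fireworks $w$, the support of $\mathfrak G_w$ is a union of boxes $[\alpha, \wt(\overline{D(w)})]$ over $\alpha \in \supp(\mathfrak S_w)$. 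Taking $\alpha$ to be the Lehmer code $\mathbf c = \wt(D(w))$ immediately gives a single box $[\mathbf c, \mathbf d] \subseteq \supp(\mathfrak G_w)$ with $\mathbf d = \wt(\overline{D(w)})$, and the lower bound $|\supp(\mathfrak G_w)| \geq \prod_i (d_i - c_i + 1)$ becomes a counting problem about the shape $\overline{D(w)} \setminus D(w)$. The permutation is chosen with block sizes $1, 2, \dots, k, b$ where $k \sim \sqrt{2n}$ (not an optimized layer profile in the sense you suggest), which makes the box volume at least $(n-k-1)! \geq n!/n^{k+1}$. Your intuition that the subexponential loss should come from the number of blocks is correct, but without the fireworks support formula (or an equivalent), the argument does not close.
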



\section*{Acknowledgements}
We thank David Anderson, Alejandro Morales, Greta Panova, Alexander Postnikov, Avery St.~Dizier, and Dora Woodruff for enlightening discussions about Schubitopes, maximizers, and the many little problems in the area. J.~Chou was partially supported by NSF Grant DMS-2054423.
\section{Background}
\subsection*{Diagrams}
A {\color{darkred}\emph{diagram}} is a subset $D\subseteq [n]\times[m]$. We view $D = (D_1, \dots, D_m)$ as a subset of an $n\times m$ grid, where $D_j\colonequals \{i \in[n]\colon (i,j)\in D\}$ records the row indices of boxes in column $j$.

\begin{defn}
The {\color{darkred}\emph{Rothe diagram}} $D(w)$ of a permutation $w \in S_n$ is defined to be
\[
D(w) = \{(i,j)\in[n]\times[n]\colon i < w^{-1}(j) \textup{ and } j < w(i)\}.
\]
(See Figure~\ref{fig:31542-rothe}, left.)
\end{defn}
\begin{figure}[ht]
\centering
\includegraphics{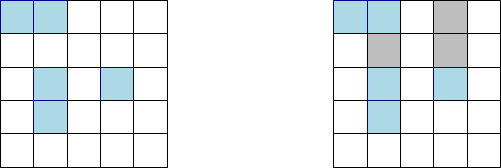}
\caption{The Rothe diagram for $w = 31542$ on the left, and its upwards closure on the right.}
\label{fig:31542-rothe}
\end{figure}

\begin{defn}
Given two sets $A = \{a_1 < a_2 < \dots < a_s\}$ and $B = \{b_1 < b_2 < \dots < b_s\}$ of the same size, we say $A\leq B$ if $a_i \leq b_i$ for all $i$. (If the sets $A$ and $B$ have different size, then they are incomparable.)

Given two diagrams $C = (C_1, \dots, C_n)$ and $D = (D_1, \dots, D_n)$ with the same number of columns, we say $C\leq D$ if $C_i \leq D_i$ for all $i$. (If the diagrams $C$ and $D$ have different numbers of columns, then they are incomparable.)
\end{defn}

\begin{defn}
The {\color{darkred}\emph{weight}} of a diagram $C = (C_1, \dots, C_m)$ is the vector whose $i$-th coordinate is the number $\wt(C)_i = \#\{j \colon i \in C_j\}$ of boxes in the $i$-th row.
\end{defn}

\begin{defn}
Let $D$ be a diagram. The {\color{darkred}\emph{upwards closure}} $\overline D$ of $D$ is
\[
\overline D = \{(i,j)\in[n]\times[n]\colon (i',j)\in D\textup{ for some } i' \geq i\},
\]
i.e., the diagram consisting of squares which are above some square in $D$. (See Figure~\ref{fig:31542-rothe}, right.)
\end{defn}

\subsection*{Schubert and Grothendieck polynomials}
For $i\in[n-1]$, the {\color{darkred}\emph{divided difference operator}} $\del_i\colon \ZZ[x_1, \dots, x_n]\to\ZZ[x_1, \dots, x_n]$ is given by
\[
\del_i(f)\colonequals \frac{f - s_if}{x_i - x_{i+1}},
\]
where $s_i$ is the operator switching the variables $x_i$ and $x_{i+1}$. The {\color{darkred}\emph{isobaric divided difference operator}} $\overline\del_i\colon \ZZ[x_1, \dots, x_n] \to \ZZ[x_1, \dots, x_n]$ is defined to be $\overline\del_i(f)\colonequals \del_i((1-x_{i+1})f)$.

The {\color{darkred}\emph{Schubert polynomial}} $\mathfrak S_w(\mathbf x)$ and {\color{darkred}\emph{Grothendieck polynomial}} $\mathfrak G_w(\mathbf x)$ of $w\in S_n$ are defined by the recursions
\[
\mathfrak S_w(\mathbf x) = \begin{cases} x_1^{n-1}\dots x_{n-1}&\textup{ if } w = w_0,\\\del_i(\mathfrak S_{ws_i}(\mathbf x))&\textup{ if } \ell(w) < \ell(ws_i),\end{cases}\quad\textup{ and } \quad \mathfrak G_w(\mathbf x) = \begin{cases} x_1^{n-1}\dots x_{n-1} &\textup{ if } w = w_0,\\ \overline\del_i(\mathfrak G_{ws_i}(\mathbf x)) &\textup{ if } \ell(w) < \ell(ws_i).\end{cases}
\]
The Schubert polynomial is the lowest degree part of the corresponding Grothendieck polynomial.

The following lemma is an easy consequence of the fact that Schubert and Grothendieck polynomials can be computed using pipe dreams. We give an alternate proof to shorten exposition.
\begin{lem}
\label{lem:upper-bound}
Any monomial $\mathbf x^\alpha$ appearing in a Schubert polynomial $\mathfrak S_w$ or a Grothendieck polynomial $\mathfrak G_w$ divides the staircase monomial $x_1^{n-1} \dots x_{n-1}$.
\end{lem}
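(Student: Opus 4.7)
I would proceed by descending induction on $\ell(w)$, with the base case $w = w_0$ immediate since $\mathfrak S_{w_0} = \mathfrak G_{w_0}$ is itself the staircase monomial $x_1^{n-1}\cdots x_{n-1}$. For the inductive step, pick $i$ with $\ell(ws_i) = \ell(w)+1$, so that $\mathfrak S_w = \del_i(\mathfrak S_{ws_i})$ and $\mathfrak G_w = \overline\del_i(\mathfrak G_{ws_i})$; by linearity it then suffices to control what $\del_i$ and $\overline\del_i$ do to an individual staircase-bounded monomial.

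More precisely, the plan reduces the lemma to the following monomial-level statement: if $x_i^a x_{i+1}^b m$ satisfies $a \leq n-i$, $b \leq n-i-1$, and $m$ divides $\prod_{j \neq i, i+1} x_j^{n-j}$, then every monomial in $\del_i(x_i^a x_{i+1}^b m)$ and in $\overline\del_i(x_i^a x_{i+1}^b m)$ again respects the same bounds. Direct expansion gives
\[
\del_i(x_i^a x_{i+1}^b m) \;=\; \pm\, m \cdot \sum_{j=0}^{|a-b|-1} x_i^{\max(a,b)-1-j}\, x_{i+1}^{\min(a,b)+j}
\]
(and $0$ when $a = b$), so each output monomial has both its $x_i$- and $x_{i+1}$-exponents bounded by $\max(a,b) - 1 \leq n-i-1$. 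This is even slightly stronger than required, and disposes of the Schubert case.

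For the Grothendieck case, I would write $\overline\del_i = \del_i \circ (1 - x_{i+1})$ and expand
\[
\overline\del_i(x_i^a x_{i+1}^b m) \;=\; \del_i(x_i^a x_{i+1}^b m) \;-\; \del_i(x_i^a x_{i+1}^{b+1} m).
\]
The first summand is handled by the previous step; in the second, $b+1$ may equal $n-i$, so the intermediate polynomial $(1-x_{i+1})f$ need not be staircase-bounded. However, the boxed formula still applies with $b+1 \leq n-i$ playing the role of $b$, and yields output with $x_i$- and $x_{i+1}$-exponents at most $\max(a, b+1) - 1 \leq n-i-1$, restoring the bound.

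The only real subtlety here is this transient violation of the staircase condition in the Grothendieck step; everything else is a mechanical consequence of the exponent bookkeeping in the explicit formula for $\del_i$ applied to a two-variable monomial.
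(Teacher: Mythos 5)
Your proposal is correct, but it takes a genuinely different route from the paper's. The paper's proof is a short citation: by \cite[Thm 1.2]{mss22}, every monomial in $\mathfrak G_w$ divides $\mathbf x^{\overline{D(w)}}$, and column $w(i)$ of $\overline{D(w)}$ sits inside $\{1,\dots,i-1\}$, so $\mathbf x^{\overline{D(w)}}$ itself divides the staircase; the Schubert case then follows because $\mathfrak S_w$ is the lowest-degree part of $\mathfrak G_w$. You instead run a self-contained descending induction on $\ell(w)$ through the defining transition formula, tracking the $x_i$- and $x_{i+1}$-exponents at the monomial level under $\del_i$ and $\overline\del_i$. Your explicit expansion of $\del_i$ on a two-variable monomial and the resulting bound $\max(a,b)-1 \leq n-i-1$ are correct, and the subtlety you flag --- that multiplying by $1-x_{i+1}$ can transiently push $b$ up to $n-i$ before $\del_i$ brings both exponents back to at most $n-i-1$ --- is the right thing to worry about and is handled correctly. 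The trade-off: your argument avoids any external input and is elementary, while the paper's is shorter and gives the sharper column-wise bound via $\overline{D(w)}$, which is more aligned with the diagrammatic machinery used elsewhere in the paper.
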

\begin{proof}
Since $\mathfrak S_w$ is the lowest degree component of $\mathfrak G_w$, it suffices to show that any monomial appearing in $\mathfrak G_w$ divides $x_1^{n-1}\dots x_{n-1}$. By \cite[Thm 1.2]{mss22}, any monomial appearing in $\mathfrak G_w$ divides $\mathbf x^{\overline{D(w)}}$. Because the $w(i)$-th column of $\overline{D(w)}$ is contained in $\{1, \dots, i-1\}$, the monomial $\mathbf x^{\overline{D(w)}}$ divides $x_1^{n-1}\dots x_{n-1}$.
\end{proof}

The support of a Schubert polynomial can be computed diagrammatically via the following theorem.

\begin{thm}[{\cite[Thm 4]{fms18}, \cite{ary21}}]
\label{thm:schubert-support-via-diagrams}
For any $w\in S_n$, the support of $\mathfrak S_w$ is given by
\[
\supp(\mathfrak S_w) = \{\wt(C)\colon C\leq D(w)\}.
\]
\end{thm}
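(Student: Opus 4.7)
The plan is to realize both sides of the asserted identity as the lattice points of one and the same polytope, namely the Schubitope $S_{D(w)}$ of Monical--Tokcan--Yong, and then invoke the saturated Newton polytope (SNP) property for Schubert polynomials.

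The first ingredient is a purely combinatorial fact about arbitrary diagrams: for any $D\subseteq[n]\times[n]$,
\[
S_D \cap \ZZ^n \;=\; \{\wt(C)\colon C\leq D\},
\]
where $S_D$ is defined as the generalized permutohedron obtained as the Minkowski sum, over columns $j$, of the matroid polytopes whose bases are the sets $B$ with $B\leq D_j$ and $|B|=|D_j|$. The ``$\supseteq$'' direction is immediate from the column-wise definition of $\leq$ on diagrams. The ``$\subseteq$'' direction is a Hall-type marriage argument: a lattice point of a Minkowski sum of matroid polytopes decomposes into a sum of lattice points of the summands, and each such summand is by construction the indicator of some $C_j \leq D_j$.

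The second ingredient is the identification $\mathrm{Newton}(\mathfrak S_w) = S_{D(w)}$. For $\mathrm{Newton}(\mathfrak S_w)\subseteq S_{D(w)}$, I would combine the pipe dream formula $\mathfrak S_w = \sum_{P\in RP(w)} \mathbf x^{\wt(P)}$ with the column-by-column observation that every reduced pipe dream of $w$ admits a weight-preserving realization as a subdiagram $C\leq D(w)$; Lemma~\ref{lem:upper-bound} already covers the easier containment of the supporting box. For the reverse, one must check that every vertex of $S_{D(w)}$ is achieved by a monomial of $\mathfrak S_w$; these vertices are indexed by permutations (as for any generalized permutohedron), and each is realized by a pipe dream produced from a reduced word of $w$ adapted to the corresponding ordering, or equivalently by a ladder/chute descent from the bottom pipe dream.

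The final and most delicate step, which I anticipate as the main obstacle, is SNP: $\supp(\mathfrak S_w) = \mathrm{Newton}(\mathfrak S_w)\cap\ZZ^n$. Elementary manipulation of the divided-difference recursion does not seem to give this, since SNP is equivalent to $\supp(\mathfrak S_w)$ being M-convex, a nonlocal condition. The cleanest route is through Lorentzian polynomials: normalized Schubert polynomials are Lorentzian by Huh--Matherne--Mészáros--St.~Dizier, Lorentzian polynomials have M-convex support, and M-convex subsets of $\ZZ^n$ are saturated lattice points of their convex hull. A more self-contained alternative is a direct exchange argument on reduced pipe dreams, using ladder and chute moves to show that $\supp(\mathfrak S_w)$ itself satisfies the M-convex exchange axiom. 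Either way, chaining the three steps yields
\[
\supp(\mathfrak S_w) \;=\; \mathrm{Newton}(\mathfrak S_w)\cap\ZZ^n \;=\; S_{D(w)}\cap\ZZ^n \;=\; \{\wt(C)\colon C\leq D(w)\}.
\]
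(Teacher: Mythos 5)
The paper states this result purely as a citation to \cite[Thm 4]{fms18} and supplies no proof of its own, so there is no in-paper argument to compare against; I can only assess your plan on its own terms.

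Your chain of equalities $\supp(\mathfrak S_w) = \mathrm{Newton}(\mathfrak S_w)\cap\ZZ^n = \mathcal S_{D(w)}\cap\ZZ^n = \{\wt(C)\colon C\leq D(w)\}$ is the correct conceptual picture, and your first ingredient (the Schubitope is a Minkowski sum of matroid polytopes whose lattice points are exactly the $\wt(C)$ with $C\leq D$) is indeed a statement about arbitrary diagrams, logically independent of Schubert polynomials. The genuine gap is in your treatment of SNP. The assertion that ``normalized Schubert polynomials are Lorentzian by Huh--Matherne--M\'esz\'aros--St.~Dizier'' mis-cites a conjecture as a theorem: that paper proves the Lorentzian property for normalized \emph{Schur} polynomials and only \emph{conjectures} it for Schubert polynomials, where it remains open in general. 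So your ``cleanest route'' is not currently available. Worse, the standard proof that Schubert polynomials have SNP \emph{is} \cite[Thm 4]{fms18} itself, so invoking SNP or M-convexity of $\supp(\mathfrak S_w)$ as a black box here is circular unless you establish it independently. Your two fallback sketches --- a direct M-convex exchange argument via ladder/chute moves, and the claim that every Schubitope vertex is hit by a pipe dream from an ``adapted reduced word'' --- are precisely the nontrivial content that a real proof would need to carry out (and that FMS handle by a different inductive route), but neither is fleshed out; note also that Lemma~\ref{lem:upper-bound} only locates the support inside the staircase box, far short of the containment $\supp(\mathfrak S_w)\subseteq\mathcal S_{D(w)}$. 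As written, this is a correct account of how the objects fit together once the theorem is known, but it is not a proof.
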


The Newton polytope of a Schubert polynomial $\conv(\supp(\mathfrak S_w))$ is an example of a \emph{Schubitope}. 

\begin{defn}[\cite{mty19}]
The {\color{darkred}\emph{Schubitope}} $\mathcal S_D$ of a diagram is the convex hull
\[
\mathcal S_D = \conv(\{\wt(C)\colon C\leq D\}).\qedhere
\]
\end{defn}
\begin{lem}[{cf.\ \cite[Thm 7]{fms18}}]
The set $\{\wt(C)\colon C\leq D\}$ is saturated, i.e., 
\[
\mathcal S_D\cap \ZZ^n = \{\wt(C)\colon C\leq D\}.
\]
\end{lem}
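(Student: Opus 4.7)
The inclusion $\{\wt(C) \colon C \leq D\} \subseteq \mathcal S_D \cap \ZZ^n$ is immediate; the content lies in the reverse inclusion. My plan is to exploit the column-wise factorisation of the weight set. For each column $D_j = \{d_1 < \cdots < d_s\}$, the subsets $C_j = \{c_1 < \cdots < c_s\}$ with $C_j \leq D_j$ are precisely the bases of the transversal (Schubert) matroid $M_j$ associated to the nested set system $[d_1] \subset \cdots \subset [d_s]$. In particular, $P_j \colonequals \conv\{\mathbf 1_{C_j} \colon C_j \leq D_j\}$ is a matroid base polytope, hence itself saturated, and $\mathcal S_D$ factors as the Minkowski sum $P_1 + \cdots + P_m$.

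The key remaining step is an integer decomposition statement: every integer point of $P_1 + \cdots + P_m$ is of the form $\mathbf 1_{C_1} + \cdots + \mathbf 1_{C_m}$ with each $C_j \leq D_j$. I would obtain this from the inequality description of $\mathcal S_D$ supplied by \cite[Thm 7]{fms18}, which identifies $\mathcal S_D$ as the base polytope of an integer polymatroid with rank function $S \mapsto \sum_j \mathrm{rk}_{M_j}(S)$; the decomposition then follows by iterating a symmetric base exchange column-by-column to turn an arbitrary convex representation of a given $\alpha \in \mathcal S_D \cap \ZZ^n$ into a single sum of indicator vectors of bases.

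The main obstacle is precisely this decomposition step, because Minkowski sums of matroid polytopes need not enjoy the integer decomposition property in general; the specific structure of the $M_j$ as nested transversal matroids --- for which strong base orderability and symmetric exchange hold --- must be used. A fully self-contained alternative would be a direct greedy construction along rows: decide which $\alpha_1$ columns contain row $1$, chosen so that each remaining $C_j$ still admits enough small elements to satisfy $C_j \leq D_j$, then induct on the diagram obtained by deleting row $1$ from the chosen columns.
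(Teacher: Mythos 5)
The paper itself gives no proof here; the lemma is stated with only a ``cf.'' citation to \cite[Thm~7]{fms18}, so there is no in-text argument to compare against. Your proposed route --- factor $\mathcal S_D$ column by column as the Minkowski sum $P_1 + \cdots + P_m$ of Schubert-matroid base polytopes, then invoke an integer decomposition statement for that sum --- is the right framework, and it is essentially the structural idea underlying the Fink--M\'esz\'aros--St.~Dizier result. The factorization step is clean: $\{\wt(C) \colon C \leq D\}$ is the Minkowski sum of the sets $\{\mathbf 1_{C_j} \colon C_j \leq D_j\}$, and convex hulls commute with Minkowski sums.

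Where you go astray is in your self-assessment of the remaining gap. You assert that ``Minkowski sums of matroid polytopes need not enjoy the integer decomposition property in general'' and conclude that the special structure of the $M_j$ (strong base orderability, symmetric exchange) must be invoked. That worry is misplaced. It is a classical fact --- a consequence of Edmonds' polymatroid intersection and union theory; see for instance Schrijver, \emph{Combinatorial Optimization}, or Frank, \emph{Connections in Combinatorial Optimization} --- that the Minkowski sum of base polytopes of \emph{arbitrary} integer polymatroids (so in particular of matroid base polytopes) always has the integer decomposition property: given a lattice point $x$ of $B_1 + \cdots + B_m$, the set $B_1 \cap \bigl(x - (B_2 + \cdots + B_m)\bigr)$ is a nonempty intersection of two integral generalized permutahedra, hence contains a lattice point $y$, and one inducts on $x - y \in B_2 + \cdots + B_m$. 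No base-orderability or transversality hypothesis is used. So the obstacle you flagged is a phantom: once you have $\mathcal S_D = P_1 + \cdots + P_m$ with each $P_j$ a matroid base polytope, saturation is automatic, with no need for the iterated symmetric exchange you gesture at. Your fallback greedy row-by-row construction would also work and is closer in spirit to the bipartite-matching argument in \cite{fms18}, but it requires a careful feasibility invariant on the residual diagram; the polymatroid route, once corrected as above, is the cleaner of the two.
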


\begin{defn}
Given positive integers $b_1, \dots, b_m$, the {\color{darkred}\emph{layered permutation}} with \emph{block sizes} $(b_1, \dots, b_m)$ is the permutation $w(b_1, \dots, b_m)$ with one-line notation
\[
c_1 \, (c_1 - 1)\,\dots \,1\, c_2\, (c_2 - 1)\,\dots\,(c_1 + 1)\, \dots\, c_m\, (c_m - 1)\,\dots\,(c_{m-1}+1),
\]
where $c_i \colonequals b_1 + \dots + b_i$.
\end{defn}
\subsection*{Asymptotics}

We will use the following (crude) estimates for the factorial function, whose proofs we include for completeness.
\begin{lem}
\label{lem:factorial-estimate}
The inequalities
\[
e\left(\frac ne\right)^n \leq n! \leq e\left(\frac{n+1}e\right)^{n+1}
\]
hold for all positive integers $n$.
\end{lem}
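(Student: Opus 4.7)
The plan is to prove both inequalities by induction on $n$, using the two standard monotonicity facts about the sequences $a_n = (1+1/n)^n$ and $b_n = (1+1/n)^{n+1}$; namely, that $a_n$ is increasing and $b_n$ is decreasing, both with limit $e$. In particular, $a_n \leq e \leq b_n$ for every positive integer $n$, and these are the only analytic inputs I will need.

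For the base case $n=1$, the lower bound reads $e \cdot e^{-1} = 1 \leq 1!$ and the upper bound reads $1! \leq e \cdot (2/e)^2 = 4/e$, both of which are immediate. For the inductive step on the lower bound, suppose $n! \geq e(n/e)^n$. Then
\[
(n+1)! = (n+1) \cdot n! \geq (n+1) \cdot e \left(\frac{n}{e}\right)^n,
\]
so to finish it suffices to show that this quantity is at least $e((n+1)/e)^{n+1}$, which after clearing common factors is equivalent to $e \cdot n^n \geq (n+1)^n$, i.e.\ $(1+1/n)^n \leq e$. This is exactly the monotonicity of $a_n$.

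For the inductive step on the upper bound, suppose $n! \leq e((n+1)/e)^{n+1}$. Then
\[
(n+1)! = (n+1) \cdot n! \leq (n+1) \cdot e \left(\frac{n+1}{e}\right)^{n+1} = e \cdot \frac{(n+1)^{n+2}}{e^{n+1}},
\]
and we need this to be at most $e((n+2)/e)^{n+2}$. After clearing common factors this becomes $e \leq ((n+2)/(n+1))^{n+2} = (1 + 1/(n+1))^{n+2}$, which is precisely the fact that $b_{n+1} \geq e$.

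I do not expect any real obstacle here: the whole argument hinges on recognizing that the two inequalities, after the inductive reduction, are exactly the statements $a_n \leq e$ and $b_{n+1} \geq e$. If one prefers not to invoke these as black boxes, they can be proved in one line each by examining $\ln a_n$ and $\ln b_n$ via the mean value theorem or the Taylor expansion of $\ln(1+x)$; but since the paper only needs crude estimates, quoting the monotonicity of these classical sequences is the cleanest route.
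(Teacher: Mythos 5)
Your proof is correct, but it takes a genuinely different route from the paper. The paper estimates $\ln(n!) = \sum_{k=1}^n \ln(k)$ by comparison with the integrals $\int_1^n \ln(x)\,dx$ and $\int_1^{n+1}\ln(x)\,dx$, which evaluate to $n\ln(n)-n+1$ and $(n+1)\ln(n+1)-(n+1)+1$; exponentiating gives both bounds at once in two lines, using only that $\ln$ is increasing. Your approach instead runs an induction, where the inductive step for the lower bound reduces to $(1+1/n)^n \leq e$ and the step for the upper bound reduces to $(1+1/(n+1))^{n+2} \geq e$, i.e.\ the classical monotonicity of the sequences $a_n$ and $b_n$. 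The computations in your inductive steps are right and the base case checks out, so there is no gap. The trade-off is that the integral comparison is shorter and keeps the two bounds together, whereas your induction makes visible exactly which sharpenings of the inequality $a_n \leq e \leq b_n$ are being used at each step; if one wanted a fully self-contained write-up, the integral approach needs only monotonicity of $\ln$, while yours needs the monotonicity of $a_n$ and $b_n$, which is standard but a bit more work to establish from scratch.
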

\begin{proof}
Estimate $\ln(n!) = \sum_{k=1}^n\ln(k)$ by the integrals
\[
\int_1^n \ln(x)\,dx \leq \sum_{k=1}^n \ln(k) \leq \int_1^{n+1}\ln(x)\,dx.
\]
The definite integrals evaluate to $n\ln(n) - n + 1$ and $(n+1)\ln(n+1) - (n+1) + 1$. The claim follows.
\end{proof}
\begin{lem}
\label{lem:floor-fact-est}
The inequality
\[
\lfloor k\rfloor! \geq \frac1{2k}\left(\frac ke\right)^k
\]
holds for all real numbers $k>1$.
\end{lem}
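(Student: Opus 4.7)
My plan is to reduce to the integer factorial estimate of Lemma~\ref{lem:factorial-estimate} applied at $n+1$, where $n\colonequals\lfloor k\rfloor$. Since the hypothesis is $k>1$, we have $n\geq 1$, so the previous lemma applies and gives
\[
(n+1)! \geq e\left(\frac{n+1}e\right)^{n+1}.
\]
The inequality we want is equivalent to $2k\cdot n! \geq (k/e)^k$, so the goal is to massage the bound above into this form.

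The first step is to control $(k/e)^k$ by something integer-indexed. Differentiating shows $\frac{d}{dx}\bigl[(x/e)^x\bigr] = (x/e)^x \ln x \geq 0$ for $x\geq 1$, so $x\mapsto (x/e)^x$ is increasing on $[1,\infty)$. In particular $(k/e)^k \leq ((n+1)/e)^{n+1}$. Combining with the displayed lemma, we obtain
\[
\left(\frac ke\right)^k \leq \frac{(n+1)!}{e} = \frac{(n+1) \cdot n!}{e}.
\]

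The second step is to verify $2k \geq (n+1)/e$, which would upgrade the previous estimate to $(k/e)^k \leq 2k\cdot n!$, completing the proof. Since $k\geq n\geq 1$, we have $2ek\geq 2en$, and $2en\geq n+1$ is equivalent to $n(2e-1)\geq 1$, which is clear for $n\geq 1$. There is no genuine obstacle here, just bookkeeping; the main thing to watch is the edge case $n=1$ (i.e.\ $1<k<2$), where all the inequalities are at their tightest, but the slack $2e-1\approx 4.44$ is comfortably larger than $1$ so the argument survives.
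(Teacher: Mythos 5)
Your proof is correct and takes essentially the same approach as the paper: both apply the lower bound of Lemma~\ref{lem:factorial-estimate} to an integer just above $k$ (the paper uses $\lceil k\rceil$, you use $\lfloor k\rfloor + 1$, which coincide unless $k$ is an integer), compare $(k/e)^k$ to the integer-indexed quantity via monotonicity of $x\mapsto (x/e)^x$, and then absorb the remaining linear-in-$k$ factor using $k>1$. The organization is mirrored (the paper lower-bounds $\lfloor k\rfloor!$ directly, you upper-bound $(k/e)^k$), but the ingredients and the bookkeeping are the same.
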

\begin{proof}
Using Lemma~\ref{lem:factorial-estimate}, compute that
\[
\lfloor k\rfloor! \geq \frac1{k+1} \lceil k\rceil! \geq \frac1{k+1} \cdot \frac{\lceil k\rceil^{\lceil k\rceil}}{e^{\lceil k\rceil-1}} \geq \frac1{k+1}\cdot \frac{k^k}{e^k}.
\]
As $k > 1$, the claim follows.
\end{proof}
\section{A layered permutation with large Schubitope}
Our main technical result is as follows.
\begin{thm}
\label{thm:tech-main}
Let $w = w(b_1, \dots, b_m)$ be a layered permutation in $S_n$. If $b_m \leq n/2$, then
\[
|\supp(\mathfrak S_w)| \geq b_m!\cdot |\supp(\mathfrak S_{w'})|, \qquad \textup{ where } w'\colonequals w(b_1, \dots, b_{m-1})\in S_{n-b_m}.
\]
\end{thm}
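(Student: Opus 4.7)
The plan is to build an injection $\supp(\mathfrak S_{w'}) \times S_{b_m} \hookrightarrow \supp(\mathfrak S_w)$ via a block decomposition of the Rothe diagram. A direct computation with the one-line notation of $w = w(b_1, \dots, b_m)$ gives $D(w) = D(w') \sqcup L$, where $D(w')$ sits in $[c_{m-1}]\times[c_{m-1}]$ and the staircase $L$ sits in rows and columns $\{c_{m-1}+1, \dots, n\}$, with column $c_{m-1}+s$ equal to $\{c_{m-1}+1, \dots, c_{m-1}+b_m-s\}$ for $s = 1, \dots, b_m-1$. Because $D(w')$ and $L$ occupy disjoint columns, every subdiagram $C \leq D(w)$ splits as $C = C' \sqcup C''$ with $C' \leq D(w')$ (forced to use only rows in $[c_{m-1}]$) and $C'' \leq L$, so $\wt(C) = \wt(C') + \wt(C'')$ with $\wt(C')$ vanishing in coordinates beyond $c_{m-1}$.

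The core step is to realize each of the $b_m!$ tuples $(w_1, \dots, w_{b_m-1})$ with $0 \leq w_r \leq b_m - r$ as the ``last-block weight'' $(\wt(C'')_{c_{m-1}+r})_{r=1}^{b_m-1}$ for some $C'' \leq L$. Given such a tuple, set $B_s := \{r \in [b_m-s] : w_r \geq s\}$ and pick any $A_s \subseteq [c_{m-1}]$ with $|A_s| = (b_m-s) - |B_s|$; such an $A_s$ exists because the hypothesis $b_m \leq n/2$ forces $c_{m-1} \geq b_m \geq b_m - s$. Define $C''_{c_{m-1}+s} := A_s \cup (c_{m-1}+B_s)$. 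This column has the correct cardinality $b_m - s = |L_{c_{m-1}+s}|$, and for $i > |A_s|$ its $i$-th sorted element is $c_{m-1}+t_{i-|A_s|}$, where $t_j$ denotes the $j$-th element of $B_s$; the inequality $t_j \leq j + |A_s|$ holds by pigeonhole, since the complement of $B_s$ in $[b_m-s]$ has size exactly $|A_s|$. This verifies $C'' \leq L$, and the last-block weight works out to $\wt(C'')_{c_{m-1}+r} = |\{s : s \leq w_r\}| = w_r$, realizing the prescribed tuple.

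Finally, assemble the injection: for each $\alpha \in \supp(\mathfrak S_{w'})$ pick $C'_\alpha \leq D(w')$ with $\wt(C'_\alpha) = \alpha$ via Theorem~\ref{thm:schubert-support-via-diagrams} applied to $w'$, and for each admissible tuple use the corresponding $C''$ from the core step. Then $\wt(C'_\alpha \sqcup C'') = \alpha + \wt(C'')$ lies in $\supp(\mathfrak S_w)$. If two such sums coincide, projecting to coordinates $> c_{m-1}$ (where both $\alpha$-components vanish) forces the last-block weights to agree, hence the tuples agree and then $\alpha = \alpha'$. This produces $b_m! \cdot |\supp(\mathfrak S_{w'})|$ distinct elements of $\supp(\mathfrak S_w)$. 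The main obstacle I anticipate is the sort-order bookkeeping in the core step, in particular the pigeonhole inequality $t_j \leq j + |A_s|$ needed to verify $C'' \leq L$.
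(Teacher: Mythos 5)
Your proof is correct and follows essentially the same strategy as the paper: decompose $D(w)$ into $D(w')$ (columns $\leq c_{m-1}$) and the last-block staircase $L$ (columns $> c_{m-1}$), realize all $b_m!$ possible weight profiles on rows $c_{m-1}+1, \dots, n-1$ by some $C'' \leq L$ using the room in rows $[c_{m-1}]$ afforded by $b_m \leq n/2$, and conclude injectivity by projecting onto those coordinates. The only cosmetic difference is that the paper packages your ``core step'' into a separate lemma (Lemma~\ref{lem:last-block}), stated in the slightly more general form that \emph{any} subset $S \subseteq L$ arises as $C'' \cap L$ for some $C'' \leq L$, which is proven column-by-column by exactly the same ``push surplus boxes up into rows $\leq c_{m-1}$'' observation that underlies your explicit $A_s \cup (c_{m-1}+B_s)$ construction and your pigeonhole bound $t_j \leq j + |A_s|$.
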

Our proof of Theorem~\ref{thm:tech-main} will use the following lemma.
\begin{lem}
\label{lem:last-block}
Let $w\in S_n$ be a layered permutation of the form $w = w(1, \dots, 1, b_m)$ for $b_m \leq n/2$. For any subset $S\subseteq D(w)$, there exists a diagram $C$ satisfying $C\leq D(w)$ and $C\cap D(w) = S$.
\end{lem}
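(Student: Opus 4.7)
The plan is to compute $D(w)$ explicitly, then build $C$ one column at a time. Since $w = w(1, \dots, 1, b_m)$ fixes $1, \dots, m-1$ and reverses the block $m, \dots, n$, unwinding the definition of the Rothe diagram shows that $D(w)$ is empty in columns $1, \dots, m-1$, while for $j \geq m$,
\[
D(w)_j = \{m, m+1, \dots, n - j'\}, \qquad j' \colonequals j - m + 1 \in \{1, \dots, b_m\}.
\]
In particular $|D(w)_j| = b_m - j'$ and every element of $D(w)_j$ lies in $\{m, \dots, n\}$; geometrically, $D(w)$ is a staircase in the bottom-right $b_m \times b_m$ block.

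Given $S \subseteq D(w)$, take $C_j \colonequals \emptyset$ for $j < m$ (forced, since $D(w)_j = \emptyset$) and $C_j \colonequals S_j \cup A_j$ for $j \geq m$, where $A_j$ is an arbitrary subset of $\{1, \dots, m-1\}$ of size $b_m - j' - |S_j|$. Such an $A_j$ exists because the hypothesis $b_m \leq n/2$ yields $m - 1 = n - b_m \geq b_m \geq b_m - j'$. Since $A_j$ is disjoint from $D(w)_j$, we automatically obtain $C_j \cap D(w)_j = S_j$ and $|C_j| = |D(w)_j|$.

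The main step is to verify $C_j \leq D(w)_j$. Because every element of $A_j$ is at most $m-1$ while every element of $S_j$ is at least $m$, the elements of $A_j$ occupy the first $|A_j|$ positions of $C_j$ when sorted; those positions are each bounded by $m-1$, hence by $m + k - 1$, the $k$-th entry of $D(w)_j$. The remaining positions are filled by elements of $S_j$, and the required inequality reduces to the observation that the $p$-th smallest element of any $|S_j|$-element subset of $\{m, \dots, m + b_m - j' - 1\}$ is at most $m + b_m - j' - |S_j| + p - 1$, a direct pigeonhole argument.

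I do not anticipate a serious obstacle beyond identifying the construction itself; the content of the lemma is really that the hypothesis $b_m \leq n/2$ provides exactly enough rows above the staircase $D(w)$ to absorb the ``extra'' elements needed to bring $|C_j|$ up to $|D(w)_j|$ while keeping $C_j \leq D(w)_j$ intact.
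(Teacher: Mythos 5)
Your proposal is correct and follows the same strategy as the paper's proof: compute $D(w)$ explicitly as a staircase in the bottom-right block, then work column by column, padding $S_j$ with rows from $\{1,\dots,m-1\}$ (available because $|D(w)_j| \leq b_m - 1 < n - b_m = m-1$) to reach the required cardinality while preserving $C_j \leq D(w)_j$. The paper states this more tersely, leaving the verification of $C_j \leq D(w)_j$ implicit; you spell it out, but the construction and the role of $b_m \leq n/2$ are identical.
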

\begin{proof}
Each column $D_j$ of $D(w)$ is of the form $\{n-b_m + 1, \dots, n-c\}$ for some $c \geq 1$. As $|D_j| \leq b_m-1 < n-b_m$, it follows that for any subset $S_j$ of $D_j$ there exists a set $C_j\leq D_j$ with $C_j \cap D_j = S_j$. Applying this column by column to $D(w)$, the claim follows.
\end{proof}
\begin{proof}[Proof of Theorem~\ref{thm:tech-main}]
For each of the $b_m!$ many vectors $\alpha\in\ZZ_{\geq0}^{b_m}$ satisfying $(0,\dots,0) \leq \alpha \leq (b_m - 1, b_m - 2, \dots, 1, 0)$, we construct an embedding $i_\alpha\colon \mathcal S_{D(w')} \hookrightarrow \mathcal S_{D(w)}$ so that $\img(i_\alpha)\cap\img(i_{\alpha'}) = \emptyset$ for all $\alpha\neq\alpha'$.

Write $w_{\mathrm{LB}}$ for the layered permutation $w(1, \dots, 1, b_m) \in S_n$; the Rothe diagram $D(w_{\mathrm{LB}})$ has boxes only in columns $\{n-b_m+1, \dots, n\}$, and is in particular equal to the ``southeasternmost block'' of $D(w)$. Furthermore, 
\[
D(w)_j = \begin{cases} D(w')_j &\textup{ if } j \in \{1, \dots, n-b_m\}\\ D(w_{\mathrm{LB}})_j&\textup{ if } j\in\{n-b_m+1, \dots, n\},\end{cases}
\]
so in particular a diagram $C = (C_1, \dots, C_n)$ satisfies $C\leq D(w)$ if and only if the diagrams $C' \colonequals (C_1, \dots, C_{n-b_m})$ and $C_{\mathrm{LB}} \colonequals (C_{n-b_m+1}, \dots, C_n)$ satisfy $C'\leq D(w')$ and $C_{\mathrm{LB}} \leq D(w_{\mathrm{LB}})$.

For each $\alpha$ satisfying $(0,\dots,0)\leq\alpha\leq (b_m - 1, b_m - 2, \dots, 1,0)$, use Lemma~\ref{lem:last-block} to choose a diagram $C_\alpha \leq D(w_{\mathrm{LB}})$ whose weight satisfies $\wt(C_\alpha)_{n-b_m + j} = \alpha_j$. Then define the map
\begin{align*}
i_\alpha\colon \mathcal S_{D(w')}&\to \mathcal S_{D(w)}\\
\wt(C')&\mapsto\wt((C', C_\alpha));
\end{align*}
the map $i_\alpha$ is well-defined and injective because it is in fact the translation map $\gamma \mapsto \gamma +\wt(C_\alpha)$. Furthermore, any $\gamma' \in \mathcal S_{D(w')}$ satisfies $\gamma'_{n-b_m+j} = 0$ for all $j\geq 0$, so any vector $\gamma\in\img(i_\alpha)$ satisfies $\gamma_{n-b_m+j} = \alpha_j$. In particular, the $\img(i_\alpha)$ are disjoint subsets of $\mathcal S_{D(w)}$.
\end{proof}
\begin{cor}
\label{cor:upshot}
Fix an integer $n\geq 3$ and let $c\colonequals \max\{k \colon \lfloor n/2^k\rfloor \geq 1\}$ and $d \colonequals n - \sum_{k=1}^c \lfloor n/2^k\rfloor$. Let $w$ be the layered permutation with blocks of size
\[
\underbrace{1,\dots,1}_{d\textup{ many}}, \left\lfloor \frac n{2^c}\right\rfloor, \left\lfloor \frac n{2^{c-1}}\right\rfloor, \dots, \left\lfloor\frac n4\right\rfloor,\left\lfloor \frac n2\right\rfloor.
\]
Then 
\[
|\supp(\mathfrak S_w)| \geq \prod_{k=1}^c\left\lfloor \frac n{2^k}\right\rfloor!.
\]
In particular, 
\[
\ln(|\supp(\mathfrak S_w)|) \geq \left(n - \frac1{\ln(2)}\ln(n) - 2\right)\ln(n) - n\ln(4) - (n-2).
\]
\end{cor}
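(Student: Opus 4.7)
The plan is to iterate Theorem~\ref{thm:tech-main}, peeling off the last (largest) block of $w$ one block at a time, and then translate the resulting product bound into the claimed logarithmic estimate.

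First, I will verify that the hypothesis of Theorem~\ref{thm:tech-main} continues to hold at each iteration. After peeling off the blocks $\lfloor n/2\rfloor, \lfloor n/4\rfloor, \dots, \lfloor n/2^{k-1}\rfloor$, the remaining layered permutation lives in $S_{N_k}$ with $N_k = n - \sum_{j=1}^{k-1}\lfloor n/2^j\rfloor$. The geometric bound $\sum_{j=1}^{k-1} 2^{-j} < 1$ immediately gives $N_k > n/2^{k-1}$, so the next block $\lfloor n/2^k\rfloor$ satisfies $\lfloor n/2^k\rfloor \leq n/2^k < N_k/2$ and Theorem~\ref{thm:tech-main} applies. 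After $c$ such peels the only blocks remaining have size $1$, so we reach the identity permutation in $S_d$, whose Schubert polynomial is $1$ and has support of size $1$. Chaining the resulting inequalities gives the asserted product bound $|\supp(\mathfrak S_w)| \geq \prod_{k=1}^c \lfloor n/2^k\rfloor!$.

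For the logarithmic estimate I would take logs and apply Lemma~\ref{lem:floor-fact-est} to each factor (the edge case $n/2^c = 1$ can be handled by hand since $\ln(1!) = 0$). This yields
\[
\ln|\supp(\mathfrak S_w)| \geq \sum_{j=1}^c\left[\frac{n}{2^j}\ln\!\left(\frac{n}{2^j}\right) - \frac{n}{2^j} - \ln\!\left(\frac{2n}{2^j}\right)\right].
\]
Evaluating this sum using the classical identities $\sum_{j=1}^\infty 2^{-j} = 1$ and $\sum_{j=1}^\infty j\cdot 2^{-j} = 2$, together with the bounds $c \leq \ln(n)/\ln(2)$ and $n/2^c < 2$ (both valid since $c = \lfloor \log_2 n\rfloor$), the leading contributions $n\ln(n)$ and $-n\ln(4)$ appear immediately, and the remaining corrections bundle into the asserted $-(\ln(n)/\ln(2))\ln(n) - 2\ln(n) - (n-2)$ after routine algebra.

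The first step is essentially mechanical once the key inequality $\lfloor n/2^k\rfloor < N_k/2$ is verified via the geometric series. The main obstacle lies in the second step's bookkeeping: several small error terms arise from truncating the infinite geometric sums at $j = c$ and from the slack between $c$ and $\log_2(n)$, and one must confirm that they fit collectively inside the precise constants $\ln(n)/\ln(2)$, $2$, and $n-2$ stated in the corollary. No new ideas are needed for this — only careful tracking of the tail $n\ln(n)/2^c$, the $\ln(2n/2^j)$ terms, and the difference $\log_2(n) - c$.
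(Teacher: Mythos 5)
Your proposal is correct and follows essentially the same route as the paper: iterate Theorem~\ref{thm:tech-main} (verifying the hypothesis $b_m\leq n/2$ at each stage via the geometric-series bound $N_k\geq n/2^{k-1}$, which is the same inequality the paper records), then apply Lemma~\ref{lem:floor-fact-est} to each factor and control the resulting sum using $\sum_{j\geq1}j\,2^{-j}=2$, $n-n/2^c\geq n-2$, and $c\leq\log_2 n$. Your explicit note about the edge case $n/2^c=1$ is a minor point of extra care (the bound of Lemma~\ref{lem:floor-fact-est} in fact still holds at $k=1$), and the only place requiring attention in the ``routine algebra'' is to keep the $\frac{n}{2^j}\ln(n/2^j)$ and $-\frac{n}{2^j}$ contributions bundled as $\frac{n}{2^j}\ln\bigl(\tfrac{n}{2^j e}\bigr)$ before invoking $n-n/2^c\geq n-2$, exactly as the paper does, so as to land on the stated constant $-(n-2)$ rather than $-(n-1)$.
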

\begin{proof}
From the inequality
\[
\left\lfloor \frac n{2^k}\right\rfloor \leq \frac n{2^k} = \frac12\left(n - \sum_{i=1}^{k-1} \frac n{2^i}\right) \leq \frac12\left(n - \sum_{i=1}^{k-1} \left\lfloor\frac n{2^i}\right\rfloor\right),
\]
Theorem~\ref{thm:tech-main} may be repeatedly applied to the layered permutation $w$ to obtain
\[
|\supp(\mathfrak S_w)| \geq \prod_{k=1}^c\left\lfloor \frac n{2^k}\right\rfloor!.
\]
Lemma~\ref{lem:floor-fact-est} implies
\begin{align*}
\prod_{k=1}^c\left \lfloor \frac n{2^k}\right\rfloor! &\geq \prod_{k=1}^c\left(\frac{2^{k-1}}n \left(\frac n{2^k e}\right)^{\frac n{2^k}}\right) \\&\geq \frac 1{n^c} \cdot \underbrace{\prod_{k=1}^c \left(\frac1{2^k}\right)^{\frac n{2^k}}}_{\geq 2^{-2n}}\cdot \underbrace{\prod_{k=1}^c \left(\frac ne\right)^{\frac n{2^k}}}_{\geq \left(\frac ne\right)^{n-2}}\label{eqn:estimate}\tag{$*$}\\
&\geq \frac{n^{n-c-2}}{4^n\cdot e^{n-2}}\\&\geq \frac{n^{n - \log_2(n) - 2}}{4^n \cdot e^{n-2}}
\end{align*}
where the estimates in~\eqref{eqn:estimate} follow from
\[
\sum_{k=1}^c k\cdot \frac n{2^k} \leq \sum_{k=1}^\infty k \cdot \frac n{2^k} = 2n
\]
and
\[
\frac n2 + \dots + \frac n{2^c} = n - \frac n{2^c} \geq n-2.\qedhere
\]
\end{proof}

\newtheorem*{thm:main}{Theorem~\ref{thm:main}}
\begin{thm:main}
Let $\beta(n)\colonequals \max_{w\in S_n}|\supp(\mathfrak S_w)|$. Then
\[
\lim_{n\to\infty}\frac{\ln(\beta(n))}{n\ln(n)} = 1.
\]
More precisely,
\[
-\ln(4)-1 \leq \liminf_{n\to\infty}\frac{\ln(\beta(n)) - n\ln(n)}n \leq \limsup_{n\to\infty}\frac{\ln(\beta(n)) - n\ln(n)}n \leq -1.
\]
\end{thm:main}
\begin{proof}[Proof of Theorem~\ref{thm:main}]
By Lemmas~\ref{lem:upper-bound} and~\ref{lem:factorial-estimate}, the inequality 
\[
\ln(|\supp(\mathfrak S_w)|) \leq \ln(n!) \leq (n+1)\ln(n+1) - n
\]
holds for all $w\in S_n$. By Corollary~\ref{cor:upshot}, there exists $w\in S_n$ with 
\[
\ln(|\supp(\mathfrak S_w)|) \geq \left(n-\frac1{\ln(2)}\ln(n) - 2\right)\ln(n) - n\ln(4) - (n-2).
\]
The inequalities
\[
-\ln(4)-1 \leq \liminf_{n\to\infty}\frac{\ln(\beta(n)) - n\ln(n)}n \leq \limsup_{n\to\infty}\frac{\ln(\beta(n)) - n\ln(n)}n \leq -1
\]
follow.
\end{proof}

\section{Grothendieck polynomials with larger support}
A permutation is called {\color{darkred}\emph{fireworks}} if initial elements of decreasing runs are increasing. Layered permutations are, in particular, fireworks.

The support of fireworks Grothendieck polynomials is given by the following formula.

\begin{prop}[{\cite[Thm 1.1]{cs25}}]
\label{prop:fireworks-grothendieck}
Let $w\in S_n$ be a fireworks permutation. Then
\[
\supp(\mathfrak G_w) = \bigcup_{\alpha\in\supp(\mathfrak S_w)}[\alpha,\mathrm{wt}(\overline{D(w)})],
\]
where $[\alpha,\gamma]\colonequals \{\beta \in \ZZ^n \colon \alpha_i \leq \beta_i\leq\gamma_i \textup{ for all $i$}\}$ denotes the componentwise comparison interval.
\end{prop}

\begin{prop}
\label{prop:groth-precise}
Fix an integer $n$, let $k$ be the unique integer so that $\binom{k+1}2\leq n<\binom{k+2}2$, and let $b \colonequals n - \sum_{i=1}^k i$. Let $w$ be the layered permutation with blocks of size $1, 2, \dots, k, b$. Then
\[
|\supp(\mathfrak G_w)| \geq \frac{n!}{n^{k+1}}.
\]
In particular,
\[
\ln(|\supp(\mathfrak G_w)|) \geq n\ln(n) - n - (\sqrt{2n}+1)\ln(n)
\]
\end{prop}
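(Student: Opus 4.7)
The plan is to apply Proposition~\ref{prop:fireworks-grothendieck} with $\alpha = \wt(D(w))$, which lies in $\supp(\mathfrak S_w)$ because $D(w) \leq D(w)$, and then bound the resulting interval from below by a telescoping argument. Assume first $b \geq 1$. For $w = w(1, 2, \dots, k, b)$, the Rothe diagram decomposes into disjoint triangular blocks, and a direct computation shows that every row $i$ in block $r \leq k$ satisfies $\wt(D(w))_i = c_r - i$ and $\wt(\overline{D(w)})_i = (c_r - i) + \sum_{p > r}(b_p - 1)$. Using $n = \binom{k+1}{2} + b$, the row-factor $\wt(\overline{D(w)})_i - \wt(D(w))_i + 1$ simplifies to $M_r \colonequals n - k - \binom{r}{2}$, and each row in block $k+1$ contributes a factor of $1$; Proposition~\ref{prop:fireworks-grothendieck} then gives
\[
|\supp(\mathfrak G_w)| \;\geq\; \prod_{r=1}^k M_r^{r}.
\]
Observe that $M_1 = n - k$, $M_k = b$, and $M_{r+1} = M_r - r$.

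The key step is the estimate $\prod_{r=1}^k M_r^r \geq (n - k)!$. For each $r \leq k - 1$, one verifies $M_r \geq r$—equivalently $n \geq k + \binom{r+1}{2}$, which follows from $n \geq \binom{k+1}{2}$—so
\[
M_r^{r} \;\geq\; M_r(M_r - 1) \cdots (M_r - r + 1) \;=\; \frac{M_r!}{M_{r+1}!}.
\]
The partial product over $r = 1, \dots, k - 1$ then telescopes to $M_1!/M_k! = (n - k)!/b!$. The residual factor $M_k^k = b^k$ is bounded below by $b!$ using the elementary inequality $b^k \geq b^b \geq b!$, valid because $1 \leq b \leq k$. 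Combining, $\prod_r M_r^r \geq (n - k)!$, and since $n(n-1)\cdots(n - k + 1) \leq n^k$ we conclude $(n-k)! \geq n!/n^k \geq n!/n^{k+1}$. The logarithmic form follows from $\ln(n!) \geq n \ln(n) - n$ (Lemma~\ref{lem:factorial-estimate}) together with $k + 1 \leq \sqrt{2n} + 1$ (a consequence of $\binom{k+1}{2} \leq n$).

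The main obstacle is handling the final factor $M_k^k$: because $M_k = b$ may be much smaller than $k$, the telescoping cannot continue past $r = k - 1$ and one must fall back on the cruder bound $b^k \geq b!$. The edge case $b = 0$—in which the last block is absent and the recursion needs to be shifted, with $M_1 = n - k + 1$ and $M_k = 1$—admits a parallel telescoping that yields the even stronger lower bound $\prod_r M_r^r \geq (n - k + 1)!$, which still dominates $n!/n^{k+1}$.
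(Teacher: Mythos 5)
Your proof is correct and follows the same overall strategy as the paper: apply Proposition~\ref{prop:fireworks-grothendieck} to reduce to a product of row-factors, compute those factors block-by-block for the Rothe diagram of $w(1,2,\dots,k,b)$, and then compare the product to a factorial by a telescoping argument. The difference is in how carefully the last block is handled, and here your version is actually tighter than the paper's.

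The paper discards the contribution of the $k$ rows coming from the last block (the factor you call $M_k^k = b^k$) and also uses the row \emph{size} $d_i - c_i$ rather than the factor $d_i - c_i + 1$, arriving at the claimed chain
$\prod_{j=1}^{k-1}\bigl(n-k-1-\binom{j}{2}\bigr)^j \geq (n-k-1)!$.
But this inequality does not hold when $b$ is close to $k$: for instance with $k=4$, $n=14$, $b=4$ the left side is $9\cdot 8^2\cdot 6^3 = 124416$ while $(n-k-1)! = 9! = 362880$. The telescoping only reaches down to $N - \binom{k}{2} + 1 = b$ rather than to $1$, so one is left with $N!/(b-1)!$, which can fall short of $N!$. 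Your proof keeps the $b^k$ factor, uses the correct row-factor $M_r = n-k-\binom{r}{2}$, and the telescoping then gives $\prod_{r=1}^{k-1} M_r^r \geq (n-k)!/b!$; multiplying by $b^k \geq b!$ (valid for $1\leq b\leq k$) closes the gap and yields the stronger bound $\prod_{r=1}^{k} M_r^r \geq (n-k)!$. Your separate treatment of $b=0$, where the factors shift to $M_r = n-k-\binom{r}{2}+1$ and $M_k=1$, is also correct. So you did not merely reproduce the paper's argument; you patched a genuine gap in the displayed telescoping inequality, while the statement of the proposition itself (and everything downstream) remains valid.
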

\begin{proof}
Let $\mathbf c = \mathrm{wt}(D(w))$ denote the Lehmer code of $w$, and let $\mathbf d = \mathrm{wt}(\overline{D(w)})$. Proposition~\ref{prop:fireworks-grothendieck} implies that $[\mathbf c, \mathbf d] \subseteq \supp(\mathfrak G_w)$, so
\begin{equation}
\label{eqn:crude-grothendieck}
|\supp(\mathfrak G_w)| \geq \prod_{i=1}^n (d_i - c_i + 1).
\end{equation}

For $i\in[k]$, the $i$-th block of $w$ contributes to $\overline{D(w)}\setminus D(w)$ a rectangle consisting of $i-1$ columns with boxes in rows $\{1, 2, \dots, \binom i2\}$, and (when $b\neq 0$) the $(k+1)$-th block of $w$ contributes to $\overline{D(w)}\setminus D(w)$ a rectangle consisting of $b-1$ columns with boxes in rows $\{1, 2,\dots,\binom{k+1}2\}$ (see Figure~\ref{fig:dbar}, left). Hence, $\overline{D(w)}\setminus D(w)$ has $j$ rows of size $j + (j+1) + \dots + (k-1) + (b-1)$ for all $j\leq k-1$ and has (when $b\neq 0$) $k$ rows of size $b-1$ (see Figure~\ref{fig:dbar}, right).

\begin{figure}[ht]
\includegraphics{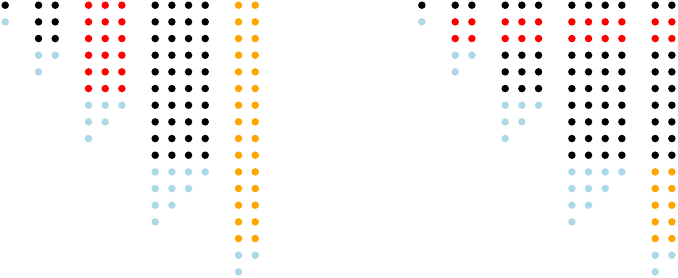}
\caption{Left: In light blue, the Rothe diagram $D(w)$ for $w$ layered with blocks of size $1,2,3,4,5,3$. In red, the fourth block of $w$ contributes to $\overline{D(w)}\setminus D(w)$ a rectangle of $3$ columns with boxes in rows $1$ through $6$. In orange, the last block of $w$ contributes to $\overline{D(w)}\setminus D(w)$ a rectangle of $2$ columns with boxes in rows $1$ through $15$.\\
Right: In light blue, the same Rothe diagram. In red, there are $2$ rows of size $2 + 3 + 4 + 2$, and in orange, there are $6$ rows of size $2$.}
\label{fig:dbar}
\end{figure}Ignoring the contribution of the $k$ rows of size $b-1$, Equation~\eqref{eqn:crude-grothendieck} yields
\begin{align*}
|\supp(\mathfrak G_w)| &\geq \prod_{j=1}^{k-1}\left(n-k-1 - \sum_{i=1}^{j-1}i\right)^j \\&\geq (n-k-1)!\\&\geq\frac{n!}{n^{k+1}}.
\end{align*}
The inequality $\binom{k+1}2\leq n$ gives $k \leq \sqrt{2n}$, and Lemma~\ref{lem:factorial-estimate} gives
\[
\ln(|\supp(\mathfrak G_w)|) \geq \ln\left(\frac{n!}{n^{\sqrt{2n}+1}}\right)\geq n\ln(n) - n - (\sqrt{2n}+1)\ln(n).\qedhere
\]
\end{proof}

\newtheorem*{thm:groth}{Theorem~\ref{thm:groth}}
\begin{thm:groth}
Let $\beta^{\mathfrak G}(n)\colonequals\max_{w\in S_n}|\supp(\mathfrak G_w)|$. Then
\[
\lim_{n\to\infty}\frac{\ln(\beta^{\mathfrak G}(n)) - n\ln(n)}n = -1.
\]
\end{thm:groth}
\begin{proof}[Proof of Theorem~\ref{thm:groth}]
By Lemmas~\ref{lem:upper-bound} and~\ref{lem:factorial-estimate}, the inequality
\[
\ln(|\supp(\mathfrak G_w)|)\leq \ln(n!) \leq (n+1)\ln(n+1)-n
\]
holds for all $w\in S_n$. By Proposition~\ref{prop:groth-precise}, there exists $w\in S_n$ with
\[
\ln(|\supp(\mathfrak G_w)|) \geq n\ln(n) - n - (\sqrt{2n}+1)\ln(n).
\]
The claim follows.
\end{proof}

%

\begin{bibdiv}
\begin{biblist}

\bib{ary21}{article}{
    author = {Adve, Anshul},
    author={Robichaux, Colleen},
    author={Yong, Alexander},
     title = {An efficient algorithm for deciding vanishing of {S}chubert polynomial coefficients},
   journal = {Adv. Math.},
    volume = {383},
      year = {2021},
     pages = {Paper No. 107669, 38}
     }
\bib{app26}{article}{
    author = {Anderson, David},
    author={Panova, Greta},
    author={Petrov, Leonid},
     title = {Computation and sampling for Schubert specializations},
     year = {2026},
     eprint={arXiv:2603.20104}
     }

\bib{cs25}{article}{
      author={Chou, Jack~C.~A.},
      author={Setiabrata, Linus},
       title={Newton polytopes of fireworks Grothendieck polynomials},
        date={2025},
      eprint={arXiv:2508.09107},
}

\bib{fms18}{article}{
      author={Fink, Alex},
      author={M\'esz\'aros, Karola},
      author={{St.~Dizier}, Avery},
       title={Schubert polynomials as integer point transforms of generalized
  permutahedra},
        date={2018},
     journal={Adv. Math.},
      volume={332},
       pages={465\ndash 475},
}

\bib{gao21}{article}{
      author={Gao, Yibo},
          title={Principal specializations of Schubert polynomials and pattern containment},
        date={2021},
      journal={Eur.\ J.\ Combin.},
      volume={94},
      pages={Paper No. 103291}
}

\bib{gl24}{article}{
      author={Guo, Peter},
      author={Lin, Zhuowei},
       title={Schubert polynomials and patterns in permutations},
        date={2024},
      eprint={arXiv:2412.02932},
}

\bib{ls82}{article}{
      author={Lascoux, Alain},
      author={Sch\"utzenberger, Marcel-Paul},
       title={Polyn\^omes de {S}chubert},
        date={1982},
     journal={C. R. Acad. Sci. Paris S\'er. I Math.},
      volume={294},
      number={13},
       pages={447\ndash 450},
}
\bib{mty19}{article}{
    author = {Monical, Cara},
    author={Tokcan, Neriman},
    author={Yong, Alexander},
     title = {Newton polytopes in algebraic combinatorics},
   journal = {Selecta Math. (N.S.)}
    volume = {25},
      year = {2019},
    number = {5},
     pages = {Paper No. 66, 37}
}
\bib{mpp19}{article}{
      author={Morales, Alejandro~H.},
      author={Pak, Igor},
      author={Panova, Greta},
       title={Asymptotics of principal evaluations of {S}chubert polynomials
  for layered permutations},
        date={2019},
     journal={Proc. Amer. Math. Soc.},
      volume={147},
      number={4},
       pages={1377\ndash 1389},
}

\bib{mppy25}{article}{
      author={Morales, Alejandro~H.},
      author={Panova, Greta},
      author={Petrov, Leonid},
      author={Yeliussizov, Damir},
       title={Grothendieck shenanigans: {P}ermutons from pipe dreams via
  integrable probability},
        date={2025},
     journal={Adv. Math.},
      volume={383},
       pages={Paper No. 107669},
}

\bib{ms16}{article}{
      author={Merzon, Grigory},
      author={Smirnov, Evgeny},
       title={Determinantal identities for flagged {S}chur and {S}chubert
  polynomials},
        date={2016},
     journal={Eur. J. Math.},
      volume={2},
      number={1},
       pages={227\ndash 245},
}

\bib{mss22}{article}{
      author={M\'esz\'aros, Karola},
      author={Setiabrata, Linus},
      author={{St. Dizier}, Avery},
       title={An orthodontia formula for {G}rothendieck polynomials},
        date={2022},
     journal={Trans. Amer. Math. Soc.},
      volume={375},
      number={2},
       pages={1281\ndash 1303},
}

\bib{stanley17}{article}{
      author={Stanley, Richard~P.},
       title={Some {S}chubert shenanigans},
        date={2017},
      eprint={arXiv:1704.00851},
}

\bib{zhang25}{article}{
      author={Zhang, Ningxin},
       title={Principal specializations of {S}chubert polynomials,
  multi-layered permutations and asymptotics},
        date={2025},
     journal={Adv. in Appl. Math.},
      volume={163},
       pages={Paper No. 102806, 19},
}

\end{biblist}
\end{bibdiv}
\end{document}